 \DeclareMathOperator{\perm}{Sym}
\DeclareMathOperator{\aut}{Aut}
\DeclareMathOperator{\F}{F}
 \DeclareMathOperator{\frat}{Frat}
  \DeclareMathOperator{\diam}{diam}
\DeclareMathOperator{\gl}{GL} 
\DeclareMathOperator{\GL}{GL}
\newcommand{\g}{\gamma}
\DeclareMathOperator{\z}{{\mathbb{Z}}}
\newtheorem{thm}{Theorem}
\newtheorem{cor}[thm]{Corollary}
 \newtheorem{lemma}[thm]{Lemma}
\numberwithin{equation}{section}
\renewcommand{\footnote}{\endnote}
\newcommand{\ignore}[1]{}\makeglossary
\begin{document}
	\bibliographystyle{amsplain}
	\subjclass{ 20K05, 20D60, 05C25}
\keywords{abelian groups, generation, generating graph, free groups, Fibonacci numbers}
	\title[The generating graph of the abelian groups]{The generating graph of the abelian groups}
\author{Cristina Acciarri}
\address{Cristina Acciarri\\ Department of Mathematics, University of Brasilia, 70910-900 Bras\'ilia DF, Brazil\\ 
email:acciarricristina@yahoo.it}
	\author{Andrea Lucchini}
\address{Andrea Lucchini\\ Universit\`a degli Studi di Padova\\  Dipartimento di Matematica \lq\lq Tullio Levi-Civita\rq\rq\\ Via Trieste 63, 35121 Padova, Italy\\email: lucchini@math.unipd.it}
\thanks{Research partially supported by MIUR-Italy via PRIN \lq Group theory and applications\rq. The first author is also supported by  Conselho Nacional de Desenvolvimento Cient\'ifico e Tecnol\'ogico CNPq-Brazil and FAPDF}	
		
	\begin{abstract} For a group $G,$ let $\Gamma(G)$ denote the graph defined on the 
		elements of $G$ in such a way that two distinct vertices are connected by an
		edge if and only if they generate $G$. Moreover let $\Gamma^*(G)$ be the subgraph of $\Gamma(G)$ that is induced by all  the vertices of $\Gamma(G)$ that are not isolated. We prove that if $G$ is a 2-generated non-cyclic abelian group then $\Gamma^*(G)$ is connected. Moreover $\diam(\Gamma^*(G))=2$ if the torsion subgroup of $G$ is non-trivial and $\diam(\Gamma^*(G))=\infty$ otherwise. If $F$ is the free group of rank 2, then  $\Gamma^*(F)$ is connected and we deduce from $\diam(\Gamma^*(\z\times \z))=\infty$ that $\diam(\Gamma^*(F))=\infty.$

			\end{abstract}
	\maketitle
For any group $G,$ the generating graph associated to $G,$ written  $\Gamma(G),$ is the graph where the vertices are the  elements of $G$ and we draw an edge between $g_1$ and $g_2$ if $G$ is generated by $g_1$ and $g_2.$ If $G$ is not 2-generated, then there will be no edge in this graph. Thus, it is natural to assume that $G$ is 2-generated. 	

There could be many isolated vertices in this graph. All of the elements in the Frattini subgroup will be isolated vertices, but we can also find isolated vertices outside the Frattini subgroup (for example the nontrivial elements of the Klein subgroup are isolated vertices in  $\Gamma(\perm(4)).$ 

Let $\Gamma^*(G)$ be the subgraph of $\Gamma(G)$ that is induced by all  the vertices that are not isolated.
In \cite{CLis} it is  proved that if $G$ is a 2-generated finite soluble group, then $\Gamma^*(G)$ is connected. Later on in \cite{diam} the second author investigated the diameter of such a graph  proving that  if $G$ is a finite soluble group then $\diam(\Gamma^*(G))\leq 3$. This bound is best possible, however 
$\diam(\Gamma^*(G))\leq 2$ in some relevant cases. In particular $\diam(\Gamma^*(G))\leq 2$ whenever $G$ is a finite abelian group.

In this paper we want to analyse the graph $\Gamma^*(G)$ when $G$ is an infinite 2-generated abelian group. If $G$ is an infinite cyclic group then the graph $\Gamma(G)$ coincides with $\Gamma^*(G)$ and it is connected with diameter 2 (indeed any vertex of $\Gamma(G)$ is adjacent to a vertex corresponding to a generator of $G$). So we may assume that $G$ is non-cyclic. There are two different cases. First we will consider the situation when the torsion subgroup $T(G)$ of $G$ is non-trivial. This means that $G= \z\times\z/n\z$ for some integer $n\geq 2$. In this case we will see that an argument relying on the Dirichlet's theorem on arithmetic progressions allows us to 
prove that $\Gamma^*(G)$ is connected and $\diam(\Gamma^*(G))=2.$ The situation is more intriguing when $T(G)$ is trivial, i.e. when $G\cong \z \times \z.$ In this latter case we will deduce that $\Gamma^*(G)$ is a connected graph using the fact that every $2\times 2$ invertible matrix over $\mathbb Z$ is a product of elementary matrices (\cite[Theorem 14.2]{om}).  However, although every element $A\in \GL(2,\z)$ can be written as a product of elementary matrices,
the number of elementary matrices required depends on the entries of $A$ and it is
not bounded. As explained for example in \cite{ck}, the reason for this is related to the fact that arbitrarily many divisions with remainder may be required when any (general) Euclidean algorithm is used to find the greatest common divisor of two integers. This leads to conjecture that the diameter of $\Gamma^*(\z\times \z)$ could be infinite. We will confirm this conjecture proving that the fact that division chains in $\z$ can be arbitrarily long (as happens for example when two consecutive Fibonacci numbers are considered) implies that the distance between two vertices of $\Gamma^*(\z\times \z)$ can be also arbitrarily large. Summarizing we have the following result.
\begin{thm}
Let $G$ be a 2-generated abelian group. Then $\Gamma^*(G)$ is connected. Moreover if
either  $G$ is cyclic or the torsion subgroup of $G$ is non-trivial, then $\diam(\Gamma^*(G))=2,$ while $\diam(\Gamma^*(\z\times \z))=\infty.$
\end{thm}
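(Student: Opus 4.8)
The plan is to handle in turn the three possibilities for a non-cyclic $2$-generated abelian group $G$ — namely $G=\z$, $G=\z\times\z/n\z$ with $n\ge 2$, and $G=\z\times\z$ — and in each case to begin by writing down explicitly the edges of $\Gamma(G)$ and the set of non-isolated vertices. If $G=\z$ then $g_1\sim g_2$ iff $\gcd(g_1,g_2)=1$, so $1$ is adjacent to every vertex; hence $\Gamma(\z)=\Gamma^*(\z)$ is connected of diameter $2$ (it is not complete, e.g.\ $2\not\sim 4$). If $G=\z\times\z/n\z$, reducing modulo $pG$ for the primes $p\mid n$ shows that $(a_1,b_1)\sim(a_2,b_2)$ iff $\gcd(a_1,a_2)=1$ and $\gcd(a_1b_2-a_2b_1,n)=1$ (a condition independent of the chosen integral lifts of $b_1,b_2$), and that $(a,b)$ is non-isolated iff $\gcd(a,b,n)=1$: if a prime $p$ divides $a,b$ and $n$ then $p$ divides every determinant $ab'-a'b$, and conversely a neighbour is readily exhibited.

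To obtain $\diam(\Gamma^*(\z\times\z/n\z))=2$, given non-adjacent non-isolated vertices $x=(a_1,b_1)$, $y=(a_2,b_2)$ I would look for a common neighbour $z=(a,b)$, i.e.\ for $a,b$ with $\gcd(a,a_1)=\gcd(a,a_2)=1$ and $\gcd(ab_i-a_ib,n)=1$ for $i=1,2$. Fixing first a convenient $a$ coprime to $a_1a_2$ (using Dirichlet's theorem to take $a$ prime in a prescribed residue class), one then has to choose $b$: modulo each prime $p\mid n$ each of the two divisibility conditions rules out at most one residue of $b$ — and none when $p\mid a_i$, by $\gcd(a_i,b_i,n)=1$ — so for odd $p$ a good $b$ modulo $p$ exists, while the prime $2$ is dealt with by the remaining freedom in the choice of $a$ (for instance $a=2$ works when $a_1,a_2$ are both odd). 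The Chinese Remainder Theorem then assembles a suitable $b$, giving $\diam\le 2$; equality holds because, say, $(2,1)\not\sim(4,1)$ while both are non-isolated. I expect the only delicate point to be the bookkeeping of the excluded residues at the small primes, together with the degenerate cases $a_i=0$ (where $z$ is forced to have first coordinate $\pm1$, but then one of the determinant conditions becomes automatic).

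For $G=\z\times\z$ the edge relation is that $(a_1,b_1)\sim(a_2,b_2)$ iff the integral matrix with these rows is invertible, and the non-isolated vertices are exactly the primitive vectors (those with coprime coordinates). For connectedness I would complete a given primitive vector $u$ to a $\z$-basis, write the corresponding matrix of $\GL(2,\z)$ as a product $E_1\cdots E_r$ of elementary matrices (\cite[Theorem 14.2]{om}), and note that left multiplication by an elementary matrix fixes one of the two rows; following the partial products $E_j\cdots E_r$ therefore yields a walk in $\Gamma^*(\z\times\z)$ from $\{e_1,e_2\}$ to $\{u,w\}$ along which consecutive pairs share a vertex and each pair spans an edge, so $u$ is joined to $e_1$, and hence $\Gamma^*(\z\times\z)$ is connected.

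Finally, for $\diam(\Gamma^*(\z\times\z))=\infty$ I would attach to each primitive vector $v$ the length $n(v)$ of the division chain (Euclidean algorithm) on $|v_1|,|v_2|$, and prove the key estimate that $u\sim v$ implies $n(v)\ge n(u)-2$. Granting this, the consecutive Fibonacci vectors $u_k=(F_{k+1},F_k)$ have division chains of length $k-1$, so $n(u_k)\to\infty$, and any path from $u_k$ to a fixed vertex has length $\ge (n(u_k)-O(1))/2\to\infty$; hence the diameter is infinite. For the estimate I would describe the neighbourhood of $u$ explicitly: writing $(c_0,d_0)$ for the solution of the unimodular equation with $|d_0|$ minimal, the neighbours of $u$ are exactly the vectors $\pm(c_0,d_0)+tu$, $t\in\z$; in continued-fraction terms $(c_0,d_0)$ is a penultimate convergent of $u$, so $n(c_0,d_0)\in\{n(u)-1,n(u)-2\}$, while adding a nonzero multiple of $u$ can only lengthen the division chain. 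I expect this last verification — carried out carefully over signs and over the few small exceptional vectors — to be the main obstacle: it is precisely the assertion that a single edge cannot shorten a division chain by more than a bounded amount, which is exactly how the unboundedness of division chains in $\z$ (the Fibonacci phenomenon alluded to in \cite{ck,om}) forces distances in $\Gamma^*(\z\times\z)$ to be unbounded.
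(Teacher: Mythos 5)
Your treatment of the first three cases is essentially sound and runs parallel to the paper's. For $\z\times\z/n\z$ your adjacency criterion agrees with Lemma \ref{generation}, and your common-neighbour construction uses the same two ingredients (Dirichlet plus a residue count) as Theorem \ref{abeliantorsion}, though organized differently: the paper first solves the problem modulo $n$ by quoting the known finite case $\z/n\z\times\z/n\z$ and then only has to correct the first coordinate along an arithmetic progression $x+tn$, which sidesteps the bookkeeping you flag at the prime $2$ and at $a_i=0$; your more direct route should work but those cases do need to be written out. For the connectedness of $\Gamma^*(\z\times\z)$ your argument is exactly the paper's (Lemma \ref{lem} and the theorem following it).

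The genuine gap is in the proof that $\diam(\Gamma^*(\z\times\z))=\infty$. Your argument hinges on the local estimate that $u\sim v$ implies $n(v)\ge n(u)-2$. For this to yield anything, $n(v)$ must be the \emph{minimum} length over all generalized division chains for $v$ (arbitrary quotients and signed remainders), not the length of the standard Euclidean algorithm; otherwise a neighbour could admit an entirely different, much shorter chain and the estimate says nothing. With that definition two things are asserted but not proved: (a) that $n(\pm(c_0,d_0)+tu)\ge n(c_0,d_0)$ for $t\ne 0$ and that $n(c_0,d_0)\ge n(u)-2$ --- ``adding a multiple of $u$ can only lengthen the chain'' is not a proof, since the minimum is taken over all chains for the new vector, not over continuations of a chosen one; and (b) that $n(F_{k+1},F_k)\to\infty$, which for the \emph{minimal} chain length is not the trivial step count but Lazard's theorem that least-absolute-remainder division is optimal. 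The paper avoids the local estimate altogether: Lemma \ref{distance_d} converts a path of length $d$ from $(0,1)$ to $(a,b)$ into a factorization of a matrix with last row $(a,b)$ into $d$ factors $\begin{pmatrix}0&1\\1&q_i\end{pmatrix}$ (up to signs), and Lemma \ref{distance} reads off from the partial products a generalized Euclidean chain for $(b,a)$ of length at most $d$; Lazard's lower bound for $(F_{2n+1},F_{2n})$ then finishes Corollary \ref{coro1}. If you keep your potential-function formulation you must prove (a) in full, which in substance amounts to redoing Lemmas \ref{distance_d} and \ref{distance}; you will still need Lazard's optimality result for (b) in either approach.
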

Finally let $F$ be the free group of rank 2. The fact that every automorphism of $F$ can be expressed as a product of elementary Nielsen transformations can be used to prove that the graph
$\Gamma^*(F)$ is connected (see Theorem \ref{confree}).  Moreover we will deduce from $\diam(\Gamma^*(\z\times \z)=\infty$ that also the graph  $\Gamma^*(F)$ cannot have a finite diameter (see Corollary \ref{fine}).

\section{Abelian groups with a non-trivial torsion subgroup}

If $G$ is a 2-generated finite abelian group, then $\Gamma^*(G)$ is
connected with diameter at most 2. This can be deduced from \cite[Corollary 3]{diam}, however a short alternative easy proof can be also given.  Indeed, we can write $G=P_1\times \dots\times P_t$ as a direct product of its Sylow subgroups. It can be easily seen that $\Gamma^*(G)$ is the cross product of the graphs  $\Gamma^*(P_1),\dots,\Gamma^*(P_t)$  so it suffices to prove that the graph
$\Gamma^*(G)$ is connected with diameter at most 2 in the particular case when $G$ is a 2-generated abelian $p$-group. On the other hand $\Gamma^*(G)=\Gamma^*(G/\frat(G))$
so we may assume that $G$ is an elementary abelian $p$-group of rank at most 2, and the conclusion follows from the fact that given two non-zero vectors $v_1,$ $v_2$ of a 2-dimensional vector space, there always exists a third vector $w$ such that $\langle v_1,w\rangle=\langle v_2,w\rangle.$

In the remaining part of this section, we will assume that $G$ is an infinite 2-generated abelian group and that the torsion subgroup of $G$ is non-trivial.
This means that there exists $n\geq 2$ such that $G= \z\times\z/n\z$.  In what follows,  we will  denote by $\bar{x}$ the image of  an integer $x$ in $\z/n\z$ under the natural epimorphism.

\begin{lemma}\label{generation} Let $G= \z\times\z/n\z$ and $a,b,c,d \in \z$.  The vertices $(a,\bar{b})$ and $(c,\bar{d})$ are adjacent in $\Gamma(G)$ if and only if the following two conditions hold:

\begin{itemize}
\item[(i)] $\mathrm{gcd}(a,c)=1$;
\item[(ii)] $\det\begin{pmatrix}a&b\\c&d\end{pmatrix}$ is coprime to $n$.
\end{itemize}

\end{lemma}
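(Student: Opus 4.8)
The plan is to work directly with the subgroup $H=\langle (a,\bar b),(c,\bar d)\rangle$ of $G=\z\times\z/n\z$ and decide exactly when $H=G$. The necessity of (i) is immediate: composing with the projection $\pi\colon G\to\z$ onto the first coordinate gives $\pi(H)=\langle a,c\rangle=\gcd(a,c)\z$, so $H=G$ forces $\gcd(a,c)=1$. Hence for the rest of the argument I would assume $\gcd(a,c)=1$ and show that then $H=G$ if and only if (ii) holds.

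Assuming $\gcd(a,c)=1$, choose by Bézout integers $x,y$ with $xa+yc=1$; then $x(a,\bar b)+y(c,\bar d)=(1,\bar e)\in H$, where $e=xb+yd$ (note that the relevant quantities below are combinations of the two generators, hence well defined modulo $n$ regardless of the chosen lift). Subtracting $a(1,\bar e)$ from $(a,\bar b)$ and $c(1,\bar e)$ from $(c,\bar d)$ shows that $H=\langle(1,\bar e)\rangle+(\{0\}\times K)$, where $K=\langle\,\overline{b-ae},\,\overline{d-ce}\,\rangle\le\z/n\z$. Since the image of $(1,\bar e)$ generates $G/(\{0\}\times\z/n\z)\cong\z$, any element of $H$ with trivial first coordinate lies in $\{0\}\times K$; consequently $H=G$ if and only if $K=\z/n\z$, equivalently $\gcd(b-ae,\,d-ce,\,n)=1$.

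The computational heart is then to identify $K$. Using the integer identity $xa+yc=1$ one gets $b-ae=b-a(xb+yd)=y(bc-ad)$ and $d-ce=d-c(xb+yd)=x(ad-bc)$, so $K=\langle\,\overline{y(ad-bc)},\,\overline{x(ad-bc)}\,\rangle$. Moreover $xa+yc=1$ forces $\gcd(x,y)=1$, so there are $u,v\in\z$ with $ux+vy=1$, whence $\overline{ad-bc}=u\,\overline{x(ad-bc)}+v\,\overline{y(ad-bc)}\in K$; conversely both listed generators of $K$ are multiples of $\overline{ad-bc}$. Therefore $K=\langle\overline{ad-bc}\rangle$, which equals $\z/n\z$ precisely when $\gcd(ad-bc,n)=1$. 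Combining this with the previous paragraph, and recalling that $\det\begin{pmatrix}a&b\\c&d\end{pmatrix}=ad-bc$, yields exactly the equivalence of adjacency with (i) and (ii).

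I do not expect a serious obstacle here: the one point requiring a little care is keeping the reductions modulo $n$ consistent (especially the well-definedness of $b-ae$ and $d-ce$ mod $n$ and the passage to the quotient by $\{0\}\times\z/n\z$), and the short gcd manipulation that collapses $K$ to the cyclic subgroup generated by $\overline{\det}$. An alternative route would be to bring the $2\times 3$ integer matrix of ``coordinates'' of the two generators into a normal form, but the direct subgroup computation above seems cleanest and is the one I would carry out.
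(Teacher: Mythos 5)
Your argument is correct, but it follows a genuinely different route from the paper's. The paper first observes that $(a,\bar b)$ and $(c,\bar d)$ generate $G$ if and only if $(a,b)$, $(c,d)$ and $(0,n)$ generate $\mathbb Z\times\mathbb Z$, and then invokes the invariant-factor (Smith normal form) criterion for the resulting $3\times 2$ integer matrix: generation is equivalent to the gcd of all entries and the gcd of all $2\times 2$ minors both being $1$, and a short gcd manipulation (using that $\gcd(a,c)$ divides all three minors $ad-bc$, $an$, $cn$) collapses these to conditions (i) and (ii). You instead compute the subgroup $H$ directly: projection to the first factor gives the necessity of (i), B\'ezout produces $(1,\bar e)\in H$, and the identities $b-ae=y(bc-ad)$, $d-ce=x(ad-bc)$ together with $\gcd(x,y)=1$ identify $H\cap(\{0\}\times\mathbb Z/n\mathbb Z)$ as $\{0\}\times\langle\overline{ad-bc}\rangle$, whence $H=G$ if and only if $\gcd(ad-bc,n)=1$. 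Your computations check out (the sign discrepancy between $y(bc-ad)$ and $x(ad-bc)$ is harmless for the generated subgroup). What each approach buys: the paper's is shorter once one grants the invariant-factor characterization of generating tuples of $\mathbb Z^2$, which it cites implicitly; yours is entirely self-contained and elementary, at the cost of a somewhat longer explicit calculation, and it additionally exhibits the subgroup $H$ precisely (as $\langle(1,\bar e)\rangle+\{0\}\times\langle\overline{ad-bc}\rangle$ when $\gcd(a,c)=1$), which is more information than the lemma requires.
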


\begin{proof}
	We have that $(a,\bar{b})$ and $(c,\bar{d})$ generate $G$ if and only if $(a,b), (c,d)$ and $(0,n)$ generate $\mathbb Z \times \mathbb Z.$
The latter condition is equivalent to say that the invariant factors  of the matrix $$A=\begin{pmatrix}a&b\\
c&d\\0&n
\end{pmatrix}$$ are invertible elements of $\mathbb Z$, i.e.
\begin{enumerate}\item $\mathrm{gcd}(a,b,c,d,0,n)=1$;
	\item $\mathrm{gcd}\left(\det\begin{pmatrix}a&b\\c&d\end{pmatrix},
\det\begin{pmatrix}a&b\\0&n\end{pmatrix},\det\begin{pmatrix}c&d \\0&n\end{pmatrix}\right)=1.$
\end{enumerate}
Notice that $\mathrm{gcd}(a,b)$ divides $\mathrm{gcd}(ad-bc,an,bn),$ so condition (2) implies $\mathrm{gcd}(a,b)=1.$ Once we know that $\mathrm{gcd}(a,b)=1,$ we have that
$\mathrm{gcd}(an,bn)=n$, and consequently
$\mathrm{gcd}\left(\det\begin{pmatrix}a&b\\c&d\end{pmatrix},
\det\begin{pmatrix}a&b\\0&n\end{pmatrix},\det\begin{pmatrix}c&d\\0&n\end{pmatrix}\right)=\mathrm{gcd}\left(\det\begin{pmatrix}a&b\\c&d\end{pmatrix},n\right).$
\end{proof}

\begin{thm}\label{abeliantorsion}
Let $G= \z\times\z/n\z$ for some fixed $n\geq 2$.
Then $\Gamma^*(G)$ is connected and $\diam(\Gamma^*(G))=2.$
\end{thm}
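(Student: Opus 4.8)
The plan is to recast adjacency in $\Gamma(G)$ as an arithmetic condition via Lemma \ref{generation}, then describe the vertex set of $\Gamma^*(G)$ explicitly, and finally, given any two non-isolated vertices, to produce an explicit common neighbour. The bound $\diam(\Gamma^*(G))\geq 2$ will be an easy separate observation.

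First I would show that $(a,\bar b)$ is a non-isolated vertex of $\Gamma(G)$ if and only if $\gcd(a,b,n)=1$. For the forward implication: if $(a,\bar b)$ is adjacent to some $(c,\bar d)$, then any common prime divisor $p$ of $a$, $b$ and $n$ divides both $n$ and $ad-bc$, contradicting condition (ii) of Lemma \ref{generation}. Conversely, assume $\gcd(a,b,n)=1$. If $a=0$ then $\gcd(b,n)=1$ and $(0,\bar b)$ is adjacent to $(1,\bar 0)$, the determinant in condition (ii) being $-b$. If $a\neq 0$, take $c=1$ and, by the Chinese Remainder Theorem, choose $d$ with $ad-b\not\equiv 0\pmod p$ for every prime $p\mid n$; this is possible because for $p\mid a$ one has $ad-b\equiv -b\not\equiv 0$ (since $p\mid a$ and $p\mid n$ force $p\nmid b$), while for $p\nmid a$ only one residue class of $d$ must be avoided. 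In particular, a non-isolated $(a,\bar b)$ has $p\nmid b$ for every prime $p$ dividing both $a$ and $n$, and I shall use this repeatedly.

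The core of the proof is $\diam(\Gamma^*(G))\leq 2$. Let $u=(a,\bar b)$ and $v=(c,\bar d)$ be distinct non-isolated vertices; I may assume they are not adjacent, and I look for $w=(e,\bar f)$ with $\gcd(a,e)=\gcd(c,e)=1$, $\gcd(af-be,n)=1$ and $\gcd(cf-de,n)=1$, so that $u\sim w\sim v$ by Lemma \ref{generation}. The delicate point is the choice of $e$: put $e=2$ when $n$ is even, $a$ and $c$ are both odd, and $b\not\equiv d\pmod 2$, and put $e=1$ otherwise. Then $\gcd(e,a)=\gcd(e,c)=1$ and no odd prime divides $e$, while if $2\mid a$ or $2\mid c$ we are in the second case, so $e=1$. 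Now I build $f$ one prime $p\mid n$ at a time. If $p\mid a$ then $af-be\equiv -be\pmod p$, which is nonzero because $p\nmid b$ and $p\nmid e$; symmetrically, if $p\mid c$ then $cf-de\equiv -de\not\equiv 0$; such primes put no condition on $f$. If $p\nmid a$ the congruence $af-be\equiv 0\pmod p$ forbids a single residue class of $f$, and similarly $cf-de\equiv 0\pmod p$ when $p\nmid c$. Hence for odd $p$ at most two of the at least three classes modulo $p$ are excluded and a class survives; and for $p=2$, either $2\mid a$ or $2\mid c$ and at most one class is excluded, or else $2\nmid a$ and $2\nmid c$, in which case the two excluded classes are those of $be$ and $de$ modulo $2$, and the choice of $e$ makes them coincide (both equal the class of $b$, hence of $d$, when $e=1$, and both equal $0$ when $e=2$). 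Combining these local choices by the Chinese Remainder Theorem gives an integer $f$ with the required coprimality. By Lemma \ref{generation}, $w$ is adjacent to $u$ and to $v$, hence $w$ is non-isolated; also $w\neq u$ and $w\neq v$ because $G$ is not cyclic and so $\Gamma(G)$ has no loops. Thus $d(u,v)=2$, and $\Gamma^*(G)$ is connected with $\diam(\Gamma^*(G))\leq 2$.

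Finally, to see that the diameter is not $1$, pick a prime $q$ not dividing $n$. Then $(q,\bar 0)$ is adjacent to $(1,\bar 1)$ and $(q,\bar 1)$ is adjacent to $(1,\bar 0)$, so both lie in $\Gamma^*(G)$, whereas they are not adjacent to each other because $\gcd(q,q)=q\neq 1$ fails condition (i) of Lemma \ref{generation}. Hence $\diam(\Gamma^*(G))=2$. The step I expect to be the genuine obstacle is exactly the construction of $w$ at the prime $2$: the naive candidate $w=(1,\bar f)$ fails when $n$ is even and $b\not\equiv d\pmod 2$, which forces the first coordinate of $w$ to be even, and one has to check that this modification does not spoil the odd primes, which it does not since $2$ is coprime to every odd prime. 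Dirichlet's theorem on primes in arithmetic progressions provides an alternative way to locate such a $w$, but the elementary congruence analysis sketched above already suffices.
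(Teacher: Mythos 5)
Your proof is correct, but it follows a genuinely different route from the paper's. The paper reduces to the finite quotient $H=\z/n\z\times\z/n\z$, invokes the already-established fact that $\diam(\Gamma^*(H))=2$ to obtain a pair $(x,y)$ making both determinants coprime to $n$, and then perturbs only the first coordinate, $x\mapsto x+tn$ (which leaves the determinant conditions modulo $n$ untouched), using Dirichlet's theorem on primes in arithmetic progressions to make $x+tn$ a prime exceeding $|a|$ and $|c|$ and hence coprime to both. You instead work entirely locally: you first characterize the non-isolated vertices as those $(a,\bar b)$ with $\gcd(a,b,n)=1$ (a fact the paper uses only implicitly), and then build a common neighbour $(e,\bar f)$ by a Chinese Remainder Theorem analysis prime by prime, with the single genuine obstruction occurring at $p=2$ and resolved by switching the first coordinate from $1$ to $2$. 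Your argument is completely elementary and self-contained --- it needs neither the finite-group input nor Dirichlet --- at the cost of a more delicate case analysis; the paper's argument is shorter but leans on two external results. You also supply the lower bound $\diam(\Gamma^*(G))\geq 2$ explicitly via the pair $(q,\bar 0)$, $(q,\bar 1)$ with $q$ a prime not dividing $n$, a point the paper leaves to the reader. One small presentational remark: when you combine the local conditions on $f$, it is worth saying that the conditions $\gcd(af-be,n)=1$ and $\gcd(cf-de,n)=1$ depend only on $f$ modulo the product of the distinct primes dividing $n$, so any integer in the residue class produced by the Chinese Remainder Theorem works; this is true and immediate, but it is the step where the CRT output becomes an actual integer $f$.
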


\begin{proof}  Let $(a,\bar{b})$ and $(c,\bar{d})$ be   two non-isolated vertices  in  $\Gamma(G)$.  Consider $H= \z/n\z\times\z/n\z$.  As observed above $\diam(\Gamma^*(H))=2,$  hence taking the corresponding  non-isolated vertices  $(\bar{a},\bar{b})$ and $(\bar{c},\bar{d})$ in $\Gamma(H)$, there exists a pair $(x,y)$ of positive integers such that 
\[
	\det\begin{pmatrix}a&b\\x&y\end{pmatrix} \quad \text { and } \quad \det\begin{pmatrix}c&d\\x&y\end{pmatrix}
	\]
are both coprime to $n$.  Next we will show that there exists $t\in \z$ such that   $(x+tn,\bar{y})$ is adjacent simultaneously to $(a,\bar{b})$  and $(c,\bar{d})$ in $\Gamma(G)$.
By Lemma \ref{generation} it is enough to find $t\in \z$ satisfying the following conditions:

\begin{equation}\label{eq1}  \mathrm{gcd}(a,x+tn)=1=\mathrm{gcd}(c,x+tn).  \end{equation}

Indeed, by construction,  both  determinants
\[
	\det\begin{pmatrix}a&b\\x+tn&y\end{pmatrix} \quad \text { and } \quad \det\begin{pmatrix}c&d\\x+tn&y\end{pmatrix}
	\]
are  coprime to $n$.
Let $u=\mathrm{gcd}(x,n)$. Notice that  $\mathrm{gcd}(a,u)$ divides both $n$ and $\det\begin{pmatrix}a&b\\x&y\end{pmatrix}$, so
$\mathrm{gcd}(a,u)=1.$ Similarly $\mathrm{gcd}(b,u)=1.$	
Moreover there exist positive integers $x^*$ and $m$  such that $x=ux^*$ and $n=um$.  Observe that $\mathrm{gcd}(x^*,m)=1$ and so, by Dirichlet's theorem on arithmetic progressions, the set $\{x^*+tm \mid t\in \z\}$ contains infinitely many primes.  Take now $t\in \z$ such that $x^*+tm=p$ is a prime greater than $a$ and $c$. We may observe that $up=d(x^*+tm)=x+tn$ is the desired element satisfying both conditions in (\ref{eq1}).
\end{proof}

\section{The abelian free group of rank 2}
In this section we analyse the case where $G$ is a 2-generated non-cyclic torsion-free abelian  group, that is $G=\z\times \z$. First of all note that  $(a,b)$ and $(x,y)$ is a generating pair for $G$ if and only if $\det\begin{pmatrix}a&b\\x&y\end{pmatrix}=\pm 1$, so the  edges of $\Gamma^*(G)$ are in correspondence with the elements of $\gl(2,\z)$, up to a swap obtained by multiplying the matrix  $J=\begin{pmatrix}0&1\\1&0\end{pmatrix}$ on the left. Indeed, since the graph $\Gamma^*(G)$ is undirected, any generating pair  $(a,b)$ and $(x,y)$ of $G$ can be  represented equivalently   by either  the matrix $A=\begin{pmatrix}a&b\\x&y\end{pmatrix}$ or the matrix  $A^*=\begin{pmatrix}x&y\\a&b\end{pmatrix}=JA$. 

\begin{lemma}\label{nonisol} Let $(a,b)$ be a vertex of $\Gamma(G)$. Then $(a,b)$ is non-isolated if and only if $a$ and $b$ are coprime. 
\end{lemma}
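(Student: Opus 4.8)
The plan is to characterize when $(a,b)$ admits a neighbour in $\Gamma(G)$, using the matrix description of edges recalled just before the statement. Recall that a vertex $(x,y)$ is adjacent to $(a,b)$ exactly when $\det\begin{pmatrix}a&b\\x&y\end{pmatrix}=\pm 1$, i.e. $ay-bx=\pm 1$.

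For the ``only if'' direction, suppose $(a,b)$ is non-isolated, so there is some $(x,y)\in\z\times\z$ with $ay-bx=\pm 1$. Let $e=\gcd(a,b)$. Then $e$ divides $ay-bx=\pm1$, hence $e=1$, so $a$ and $b$ are coprime. (Note this argument also covers the degenerate cases: if $a=b=0$ then $\gcd(a,b)=0\neq 1$ and indeed $ay-bx=0$ can never be $\pm1$, so $(0,0)$ is isolated, consistently.)

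For the ``if'' direction, suppose $\gcd(a,b)=1$. By Bézout's identity there exist integers $s,t$ with $as+bt=1$, equivalently $a\cdot s + b\cdot t = 1$; setting $(x,y)=(-t,s)$ gives $ay-bx = as - b(-t) = as+bt = 1$, so $\det\begin{pmatrix}a&b\\-t&s\end{pmatrix}=1$ and therefore $(a,b)$ is adjacent to $(-t,s)$. Hence $(a,b)$ is non-isolated.

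There is essentially no obstacle here: the whole content is the equivalence between ``$ay-bx=\pm1$ is solvable'' and ``$\gcd(a,b)=1$'', which is just Bézout's identity in one direction and divisibility in the other. The only mild care needed is to make sure the edge criterion ($\det = \pm 1$ rather than just $\det = 1$) and the trivial vertex $(0,0)$ are handled, but both are immediate as noted above.
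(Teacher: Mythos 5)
Your proof is correct and follows essentially the same route as the paper: both reduce the statement to the solvability of $ay-bx=\pm1$ and invoke B\'ezout's identity / divisibility of the gcd. Your version merely spells out the two directions and the degenerate vertex $(0,0)$ more explicitly than the paper's one-line argument.
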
 

\begin{proof}
A pair $(a,b)$ is a non-isolated vertex of $\Gamma(G)$ if and only if   there exists a pair of integers $(x,y)$ such that  $\det\begin{pmatrix}a&b\\x&y\end{pmatrix}=\pm 1$. Namely, there exists an integer solution $(x,y)$ either for the equation $ay-bx=1$ or for the equation $ay-bx=-1$. Since this holds if and only if $\mathrm{gcd}(a,b)$ divides $\pm1$, the lemma is proved.
\end{proof}

Let $R$ be a commutative ring. An elementary matrix is an element of $\GL(n,R)$ obtained
by applying elementary transformations to the identity matrix $I_n.$ There
exist three different kinds of elementary matrices, corresponding respectively
to three different types of elementary row transformations (or equivalently,
column transformations):
\begin{itemize}
	\item transpositions $P_{ij},$ with $i \neq j,$ obtained from $I_n$ by exchanging row $i$ and
	row $j;$
	\item  dilations $D_i(u),$ obtained from $I_n$ by multiplying row $i$ by the unit $u \in
	U(R);$
	\item transvections $T_{ij}(r),$ with $i \neq j$ and $r \in R,$ obtained from $I_n$ by adding to
	row $i,$ $r$ times row $j.$ 
\end{itemize}

\begin{lemma}\label{lem}
	Assume  $\begin{pmatrix}a_1&a_2\\b_1&b_2\end{pmatrix}, \begin{pmatrix}c_1&c_2\\d_1&d_2\end{pmatrix} \in \GL(2,\mathbb Z).$ If there exists an elementary matrix $X$ in $\GL(2,\mathbb Z)$ such that
	$$\begin{pmatrix}c_1&c_2\\d_1&d_2\end{pmatrix}=X \begin{pmatrix}a_1&a_2\\b_1&b_2\end{pmatrix}$$ then
	$\alpha:=(a_1,a_2),$ $\beta:=(b_1,b_2),$ $\gamma:=(c_1,c_2)$ and $\delta:=(d_1,d_2)$ belong to the same connected component of $\Gamma(\z \times \z).$
\end{lemma}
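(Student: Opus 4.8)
The plan is to reduce the statement to a short case analysis on the type of the elementary matrix $X$. The starting point is the basic fact recorded at the beginning of this section: a pair $(p,q),(r,s)$ of elements of $\z\times\z$ generates $\z\times\z$ if and only if $\det\begin{pmatrix}p&q\\r&s\end{pmatrix}=\pm1$. In particular, the two rows of \emph{any} matrix in $\GL(2,\z)$ form an adjacent pair of (necessarily non-isolated) vertices of $\Gamma(\z\times\z)$.

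First I would note that both hypotheses already produce edges. Since $\begin{pmatrix}a_1&a_2\\b_1&b_2\end{pmatrix}\in\GL(2,\z)$, the vertices $\alpha$ and $\beta$ are adjacent; and since $X$ is invertible over $\z$, the product $\begin{pmatrix}c_1&c_2\\d_1&d_2\end{pmatrix}=X\begin{pmatrix}a_1&a_2\\b_1&b_2\end{pmatrix}$ again lies in $\GL(2,\z)$, so $\gamma$ and $\delta$ are adjacent as well. Hence it is enough to join the edge $\alpha\beta$ to the edge $\gamma\delta$ by a short path; in fact I will show that one of $\gamma,\delta$ always coincides, as an element of $\z\times\z$, with one of $\alpha,\beta$, which suffices to place all four vertices in a single connected subgraph of $\Gamma(\z\times\z)$.

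It then remains to run through the elementary matrices of $\GL(2,\z)$, keeping in mind that here $U(\z)=\{\pm1\}$. If $X=P_{12}=J$, then $\gamma=\beta$ and $\delta=\alpha$, so the two edges literally coincide. If $X=D_i(1)=I$ there is nothing to prove, and if $X=D_1(-1)$ (respectively $X=D_2(-1)$) then $\delta=\beta$ (respectively $\gamma=\alpha$), while the other new row is the negative of an old one; negating a row only changes the sign of a determinant, so the negated row stays adjacent to the unchanged one, consistently with $\gamma\sim\delta$. If $X=T_{12}(r)$ then $\delta=\beta$ and $\gamma=\alpha+r\beta$, and if $X=T_{21}(r)$ then $\gamma=\alpha$ and $\delta=\beta+r\alpha$; once more one new row equals an old one. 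In every case $\{\alpha,\beta\}\cap\{\gamma,\delta\}\ne\emptyset$, so together with the edges $\alpha\sim\beta$ and $\gamma\sim\delta$ this shows that $\alpha,\beta,\gamma,\delta$ all lie in the same connected component of $\Gamma(\z\times\z)$.

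The whole argument is essentially a bookkeeping exercise and I do not expect a genuine obstacle. The only point worth flagging is the dilation case $D_i(-1)$, where the relevant new row is $-\alpha$ (or $-\beta$) rather than an old row itself; this is harmless precisely because the generating-pair condition of Lemma \ref{nonisol}, equivalently membership in $\GL(2,\z)$, is insensitive to the sign of a row, so the edge $\gamma\sim\delta$ survives and the genuinely common vertex ($\delta=\beta$ or $\gamma=\alpha$) still ties the two edges together.
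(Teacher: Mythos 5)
Your proof is correct and follows essentially the same route as the paper's: a case analysis on the type of the elementary matrix $X$, using that the rows of any matrix in $\GL(2,\z)$ are adjacent in $\Gamma(\z\times\z)$ and that in each case the new row pair shares a vertex with $\{\alpha,\beta\}$ (the paper phrases the transvection cases as explicit paths such as $(\alpha,\beta,\alpha+t\beta)$, which is the same observation). Your explicit treatment of the dilation $D_i(-1)$ case is a detail the paper dismisses as clear, but there is no substantive difference.
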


\begin{proof}
	The statement is clearly true if $X$ is either a transposition or a dilation. If $X=\begin{pmatrix}1&t\\0&1\end{pmatrix},$ then $\gamma=\alpha+t\beta,$  $\delta=\beta$ and $(\alpha,\beta,\alpha+t\beta)$ is a path in $\Gamma(\z \times \z).$
	Similarly, if $X=\begin{pmatrix}1&0\\t&1\end{pmatrix},$ then $\gamma=\alpha,$  $\delta=t\alpha+\beta$ and $(\beta,\alpha,t\alpha+\beta)$ is a path in $\Gamma(\z \times \z).$
\end{proof}


\begin{thm}Let $G=\z\times \z.$ Then the graph $\Gamma^*(G)$ is connected.
\end{thm}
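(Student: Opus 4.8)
The plan is to combine Lemma \ref{lem} with the classical fact that every matrix in $\GL(2,\mathbb Z)$ is a product of elementary matrices (\cite[Theorem 14.2]{om}). Fix the vertex $e_1=(1,0)$; since $\gcd(1,0)=1$, Lemma \ref{nonisol} tells us that $e_1$ is non-isolated, so it suffices to show that every non-isolated vertex of $\Gamma(\z\times\z)$ lies in the connected component of $e_1$.

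First I would take an arbitrary non-isolated vertex $\alpha=(a_1,a_2)$. By Lemma \ref{nonisol} we have $\gcd(a_1,a_2)=1$, so by B\'ezout's identity there are integers $b_1,b_2$ with $a_1b_2-a_2b_1=1$; hence $A=\begin{pmatrix}a_1&a_2\\b_1&b_2\end{pmatrix}\in\GL(2,\z)$ and $\alpha$ is its first row. By \cite[Theorem 14.2]{om} we may write $A=X_kX_{k-1}\cdots X_1$ with each $X_j$ an elementary matrix of $\GL(2,\z)$. Set $A_0=I_2$ and $A_j=X_jA_{j-1}$ for $1\le j\le k$, so that $A_k=A$ and each $A_j\in\GL(2,\z)$.

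Now I apply Lemma \ref{lem} to each consecutive pair: since $A_j=X_jA_{j-1}$ with $X_j$ a single elementary matrix, the two rows of $A_{j-1}$ and the two rows of $A_j$ all lie in the same connected component of $\Gamma(\z\times\z)$. Chaining these coincidences of components for $j=1,\dots,k$, the rows of $A_0=I_2$, namely $e_1=(1,0)$ and $(0,1)$, lie in the same connected component as the rows of $A_k=A$, in particular as $\alpha$. Therefore every non-isolated vertex is in the connected component of $e_1$, and $\Gamma^*(G)$ is connected.

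I do not expect a serious obstacle once Lemma \ref{lem} and the elementary-matrix factorization are available; the only points requiring care are that Lemma \ref{lem} is phrased for multiplication by a \emph{single} elementary matrix, which is why the factorization of $A$ must be broken into one-elementary-matrix steps $A_{j-1}\mapsto X_jA_{j-1}$, and that all the intermediate rows encountered are genuine vertices (automatically non-isolated, since each row of a matrix in $\GL(2,\z)$ has coprime entries by Lemma \ref{nonisol}).
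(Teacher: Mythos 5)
Your proposal is correct and follows essentially the same route as the paper: both invoke O'Meara's factorization of $A\in\GL(2,\z)$ into elementary matrices and then chain Lemma \ref{lem} one elementary factor at a time to connect the rows of $A$ to $(1,0)$ and $(0,1)$. The only cosmetic difference is that you build the product up from $I_2$ while the paper peels off factors from the full product; the argument is the same.
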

\begin{proof}
	Assume that $\alpha=(x_1,x_2)$ is a non-isolated vertex of $\Gamma(G).$ There exists $\beta=(y_1,y_2)$ such that $G=\langle \alpha,\beta\rangle.$ This means that $A=\begin{pmatrix}x_1&x_2\\y_1&y_2\end{pmatrix}\in \GL(2,\z),$ so by \cite[Theorem 14.2]{om} there exist $t$ elementary matrices $J_1,\dots,J_t$ such that $A=J_1\cdots J_t.$ By iterated applications of Lemma \ref{lem}, for every $r\in \{1,\dots,t\},$ the rows of the matrix $J_r\cdots J_t$ belong to the connected component of $\Gamma(G)$ containing $(1,0)$ and $(0,1).$ In particular $(1,0)$ and $(x_1,x_2)$ are in the same connected component.
\end{proof}

For every non-isolated vertex $(x,y)$ of $\Gamma(\z\times \z)$ denote by $\mathcal N(x,y)$ the neighbourhood  of $(x,y)$ in $\Gamma(\z\times \z).$ Fix $(x_0,y_0)\in \mathcal N(x,y).$ The matrices in $\GL(2,\mathbb Z)$ having $(x,y)$ as second row are precisely those of the form
$$\begin{pmatrix}kx+x_0&ky+y_0\\x&y\end{pmatrix},\quad \begin{pmatrix}kx-x_0&ky-y_0\\x&y\end{pmatrix} \text{ with $k\in \mathbb Z$}$$
so $\mathcal N(x,y)=\{(kx+x_0,ky+y_0),(kx-x_0,ky-y_0) \mid k\in \mathbb{Z}\}.$
In particular 
$$
 \mathcal  N(0,1)=\{(\pm1,k) \mid k\in \z\}.
$$

\begin{lemma}\label{distance_d}
Let  $(a,b)$ be a vertex in  $\Gamma(\z\times \z)$  at distance  $d$ from $(0,1)$ and let $(x,y)$ be a vertex adjacent to $(a,b)$ and having distance $d-1$ from $(0,1).$ Then there exist  $q_1,\ldots,q_d\in\mathbb Z$ and $\eta_1,\eta_2\in \{0,1\}$ such that 
\begin{equation}\label{distant_t} 
\begin{pmatrix}x&y\\a&b\end{pmatrix}=\begin{pmatrix}0&1\\1&q_d\end{pmatrix}\ldots\begin{pmatrix}0&1\\1&q_2\end{pmatrix}\begin{pmatrix}0&1\\1&q_1\end{pmatrix}(-1)^{\eta_1} E^{\eta_2}
\end{equation}
being $E= 
\begin{pmatrix}-1&0\\0&1\end{pmatrix}
$.
\end{lemma}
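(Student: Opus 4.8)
The plan is to reconstruct the path from $(0,1)$ to $(a,b)$ as a sequence of ``Euclidean'' steps, reading each step off as a left multiplication by a matrix of the shape $\begin{pmatrix}0&1\\1&q\end{pmatrix}$, and to absorb the ambiguities (sign of the determinant $\pm 1$ and the swap encoded by $J$) into the two correction matrices $(-1)^{\eta_1}$ and $E^{\eta_2}$. I would argue by induction on $d$. For the base case $d=1$, the vertex $(x,y)$ is $(0,1)$ itself and $(a,b)\in\mathcal N(0,1)$, so by the computation preceding the lemma $(a,b)=(\pm 1,k)$ for some $k\in\mathbb Z$; then $\begin{pmatrix}0&1\\ \pm 1&k\end{pmatrix}$ equals $\begin{pmatrix}0&1\\1&q_1\end{pmatrix}(-1)^{\eta_1}E^{\eta_2}$ for a suitable choice of $q_1\in\{k,-k\}$, $\eta_1,\eta_2\in\{0,1\}$ — a direct check of the four sign possibilities.

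For the inductive step, suppose $(a,b)$ has distance $d\ge 2$ from $(0,1)$ and $(x,y)$ is a neighbour of $(a,b)$ with $\mathrm{dist}((x,y),(0,1))=d-1$. Pick a neighbour $(x',y')$ of $(x,y)$ with $\mathrm{dist}((x',y'),(0,1))=d-2$; the inductive hypothesis applies to the pair $\bigl((x,y),(x',y')\bigr)$, giving
\[
\begin{pmatrix}x'&y'\\x&y\end{pmatrix}=\begin{pmatrix}0&1\\1&q_{d-1}\end{pmatrix}\cdots\begin{pmatrix}0&1\\1&q_1\end{pmatrix}(-1)^{\eta_1}E^{\eta_2}.
\]
Now I use the description of $\mathcal N(x,y)$ given just before the lemma: since $(a,b)\in\mathcal N(x,y)$ and $(x',y')\in\mathcal N(x,y)$, both $(a,b)$ and $(x',y')$ are of the form $(kx\pm x_0, ky\pm y_0)$ with a common fixed $(x_0,y_0)$. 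Hence $(a,b)=\pm(x',y')+q_d(x,y)$ for some integer $q_d$ (possibly after replacing $(x',y')$ by its negative, which only toggles $\eta_1$), which is exactly the statement that
\[
\begin{pmatrix}x&y\\a&b\end{pmatrix}=\begin{pmatrix}0&1\\1&q_d\end{pmatrix}\begin{pmatrix}x'&y'\\x&y\end{pmatrix}
\]
after the appropriate sign adjustment. Substituting the inductive formula for the second factor yields the desired expression for $\begin{pmatrix}x&y\\a&b\end{pmatrix}$ with the same $\eta_1,\eta_2$ (or $\eta_1$ flipped).

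The main obstacle, and the only place needing real care, is the bookkeeping of signs in the previous paragraph: the relation between ``$(a,b)$ and $(x',y')$ are both in $\mathcal N(x,y)$'' and ``$(a,b)=\pm(x',y')+q_d(x,y)$'' requires knowing that the two $\pm$ signs occurring in the parametrization of $\mathcal N(x,y)$ can be normalized consistently, and that moving from one choice of the auxiliary predecessor $(x',y')$ to another changes the factorization only by the permitted corrections $(-1)^{\eta_1}E^{\eta_2}$ on the right. I would handle this by first fixing the convention $(x_0,y_0)$ with $\det\begin{pmatrix}x_0&y_0\\x&y\end{pmatrix}=+1$, so that $\mathcal N(x,y)=\{(kx+x_0,ky+y_0)\}\cup\{(-kx-x_0,-ky-y_0)\}$ is literally $\{\pm(x_0+kx,\,y_0+ky)\}$; then every neighbour of $(x,y)$ — in particular $(a,b)$ and $(x',y')$ — equals $\varepsilon((x_0,y_0)+k(x,y))$ with $\varepsilon\in\{\pm1\}$, and the difference $(a,b)-\varepsilon' \varepsilon (x',y')$ is an integer multiple of $(x,y)$, which is precisely $q_d(x,y)$. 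The sign factor $\varepsilon'\varepsilon$ is what is absorbed into $\eta_1$, and the case $\eta_2=1$ is only needed to reconcile the very first step (the base case) where $(0,1)$ sits on the ``wrong side'' of the determinant-sign ambiguity; once one passes to $d\ge 2$ the value of $\eta_2$ is inherited unchanged from the inductive hypothesis. Everything else is a routine $2\times2$ matrix verification.
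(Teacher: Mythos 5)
Your skeleton --- induction on $d$, using the parametrization of $\mathcal N(x,y)$ to peel off one factor $\begin{pmatrix}0&1\\1&q\end{pmatrix}$ per step of a geodesic --- is exactly the paper's argument, and your base case is fine. The genuine gap is in the sign bookkeeping of the inductive step, which you correctly single out as the delicate point but then resolve with a false identity. When $(a,b)=-(x',y')+q_d(x,y)$ you claim that replacing $(x',y')$ by its negative ``only toggles $\eta_1$''. It does not: $(-1)^{\eta_1}$ is the scalar matrix $-I_2$ and negates \emph{both} rows, whereas passing from $\begin{pmatrix}x'&y'\\x&y\end{pmatrix}$ to $\begin{pmatrix}-x'&-y'\\x&y\end{pmatrix}$ is left multiplication by $E=\begin{pmatrix}-1&0\\0&1\end{pmatrix}$, which does not commute with the factors $T_q=\begin{pmatrix}0&1\\1&q\end{pmatrix}$. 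Since $ET_qE=-T_{-q}$, pushing this $E$ through the inductive factorization negates every $q_i$, produces a global sign $(-1)^{d-1}$, and toggles $\eta_2$ --- so your further claim that $\eta_2$ is only needed for the base case and is ``inherited unchanged'' for $d\ge 2$ is also false. Nor can you escape by simply re-applying the inductive hypothesis to the pair $\bigl((x,y),(-x',-y')\bigr)$: at the bottom of the induction the predecessor at distance $0$ must be $(0,1)$ itself, and $(0,-1)$ is at distance $2$ from $(0,1)$, not $0$, so the negated predecessor need not satisfy the hypotheses of the lemma.

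The paper's way out is to allow both shapes $T_\alpha=\begin{pmatrix}0&1\\1&\alpha\end{pmatrix}$ and $T^*_\alpha=\begin{pmatrix}0&1\\-1&\alpha\end{pmatrix}=T_\alpha E$ for the single peeled-off factor, and then to normalize at the very end using precisely the commutation relation above: $T^*_\alpha T_{q_{d-1}}\cdots T_{q_1}A=T_\alpha T_{-q_{d-1}}\cdots T_{-q_1}(-1)^{d-1}EA$. If you replace your sign-absorption paragraph by this computation (or, alternatively, strengthen the statement being proved by induction so that it also covers first row $(0,-1)$), the rest of your argument goes through unchanged.
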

\begin{proof}
For any integer $\alpha$, let us set the following matrices:
\begin{equation}
T_\alpha=\begin{pmatrix}0&1\\1&\alpha\end{pmatrix}, \quad T^{*}_\alpha=\begin{pmatrix}0&1\\-1&\alpha\end{pmatrix}.
\end{equation}
We have 
$T_\alpha E= T^{*}_\alpha.$

We argue by induction on $d$. If the vertex $(a,b)$ is at distance   $1$ from $(0,1)$, then $(a,b)$ is  of type $(\pm 1,\alpha)$, where $\alpha$ is an  arbitrary integer. In particular we have
$$\begin{pmatrix}0&1\\1&\alpha\end{pmatrix}= T_\alpha I_2, \quad\begin{pmatrix}0&1\\-1&\alpha\end{pmatrix}= T_\alpha E$$
as desired.

Assume that the statement is true for any vertex at distance  $d-1$  from $(0,1).$ We can write
 $$\begin{pmatrix}x&y\\a&b\end{pmatrix}=T\begin{pmatrix}\tilde x&\tilde y\\x&y\end{pmatrix},$$
 where $(\tilde x,\tilde y)$ is a vertex adjacent to $(x,y)$ at distance  $d-2$ from $(0,1)$ and either $T=T_\alpha$ or $T=T_\alpha^*$ for some $\alpha\in \mathbb Z.$
 Thus, by inductive hypothesis,  there exist   $q_1,\ldots,q_{d-1}\in \mathbb Z$ and $\tilde \eta_1,\tilde \eta_2\in \{0,1\}$ such that  
  $$\begin{pmatrix}x&y\\a&b\end{pmatrix}=TT_{q_{d-1}}\cdots T_{q_1}A$$
  with $A=(-1)^{\tilde \eta_1} E^{\tilde \eta_2}.$
 If $T=T_\alpha$, we are done.
   If $T=T^{*}_\alpha$, then 
  $$T^{*}_\alpha T_{q_{d-1}}\cdots T_{q_1}A=T^{*}_\alpha E(ET_{q_{d-1}}E)\cdots(ET_{q_1}E)EA=T_\alpha T_{-q_{d-1}}\cdots T_{-q_1}(-1)^{d-1}EA.$$
This completes the proof.  
 \end{proof}

Let $R$ be an integral domain, and pick two non-zero elements $a, b \in R.$ Following \cite[Section 14]{om}, we say that
$(a, b)$ belongs to an Euclidean chain  of length $n$ if there is a sequence of divisions
$$r_i = q_{i+1}r_{i+1} + r_{i+2},\ r_i, q_i \in R, \ 0 \leq i \leq n-1,$$
such that $a = r_{0}, b = r_{1}, r_{n} \neq  0$ and $r_{n+1} = 0.$

\begin{lemma}\label{distance} 
If	the vertex $(a,b)$ of $\Gamma^*(\z\times \z)$ has distance  $d$ from $(0,1)$, then $(b, a)$ belongs to an Euclidean chain  of length at most $d.$
\end{lemma}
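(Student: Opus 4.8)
The plan is to combine Lemma \ref{distance_d} with the elementary fact that multiplying on the left by a matrix of the form $T_{q}=\begin{pmatrix}0&1\\1&q\end{pmatrix}$ records exactly one step of a division with remainder, read off from the two rows of the matrix. Concretely, if $(x,y)$ is a vertex adjacent to $(a,b)$ with distance $d-1$ from $(0,1)$, then by Lemma \ref{distance_d} there exist $q_1,\dots,q_d\in\z$ and $\eta_1,\eta_2\in\{0,1\}$ with
\begin{equation*}
\begin{pmatrix}x&y\\a&b\end{pmatrix}=T_{q_d}\cdots T_{q_1}(-1)^{\eta_1}E^{\eta_2}.
\end{equation*}
First I would strip off the scalar and the dilation $E$: since $E=\begin{pmatrix}-1&0\\0&1\end{pmatrix}$ and $(-1)^{\eta_1}$ only change the signs of certain columns, and an Euclidean chain is unaffected by replacing $(a,b)$ by $(\pm a,\pm b)$ (the quotients and remainders change sign accordingly, and the length is preserved), it suffices to treat the case $\eta_1=\eta_2=0$, i.e. the matrix is a pure product $T_{q_d}\cdots T_{q_1}$.

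Next I would set up the induction on $d$. For the base case $d=0$ the vertex is $(0,1)$, so $(b,a)=(1,0)$, and with a single division $1=q\cdot0+0$... actually more carefully: for $d=1$, the matrix is $T_{q_1}$, so $(a,b)=(1,q_1)$ and $(b,a)=(q_1,1)$ belongs to the chain $q_1=q_1\cdot1+0$ of length $1$; the case $d=0$ gives $(b,a)=(1,0)$, an Euclidean chain of length $0$. For the inductive step, write $M=T_{q_d}\cdots T_{q_1}$ and let $M'=T_{q_{d-1}}\cdots T_{q_1}$, whose rows are $(x',y')$ and $(x,y)$, a vertex at distance $\le d-1$ from $(0,1)$; by induction $(y,x')$... here I must be a little careful about which pair of the sub-matrix plays the role of $(b,a)$. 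The key identity is
\begin{equation*}
\begin{pmatrix}x&y\\a&b\end{pmatrix}=T_{q_d}\begin{pmatrix}x'&y'\\x&y\end{pmatrix}=\begin{pmatrix}x&y\\x'+q_dx&y'+q_dy\end{pmatrix},
\end{equation*}
so $(a,b)=(x'+q_dx,\,y'+q_dy)$, that is, $a=x'+q_dx$ and $b=y'+q_dy$. Reading the second coordinates, $b=y'+q_d y$, i.e. $y'=b-q_d y$: this is one division step expressing $b$ in terms of $y$ with quotient $q_d$ and remainder $y'$, provided we arrange that $|y'|<|y|$ — and this is exactly the content of the usual division algorithm once we know the $q_i$ come from such an algorithm, but in Lemma \ref{distance_d} the $q_i$ are arbitrary integers. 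So the real point is: I get to \emph{choose} the Euclidean chain, and I simply declare the sequence of remainders to be $r_0=b,\ r_1=a,\ r_2=$ (the corresponding entry of $M'$), and so on down the tower $T_{q_d}\cdots T_{q_1}$, with quotients $q_d,q_{d-1},\dots,q_1$; the divisibility relation $r_i=q_{i+1}r_{i+1}+r_{i+2}$ then holds by the matrix identity above, without any constraint on the size of the remainders, since the definition of Euclidean chain in the excerpt (following \cite[Section 14]{om}) only requires the division relations to hold, not that remainders decrease in absolute value.

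The main obstacle — and the step I would be most careful about — is bookkeeping the indices and the role of the two rows: at each stage the ``new'' row of $T_q\cdot(\text{matrix})$ is the old bottom row, so the sequence of remainders is $b, a, (\text{second-coordinate-entries going down the product})$, and one has to check that peeling off $T_{q_d}$ corresponds to the first division $r_0=q_1'r_1+r_2$ with $q_1'=q_d$ — i.e. the quotients appear in reverse order — and that the chain terminates correctly (the last remainder is nonzero and the one after is zero) because the product $T_{q_1}\cdot(\text{identity, after removing }E)$ has bottom row $(1,q_1)$ and the row above it is $(0,1)$, forcing $r_n=\pm1\neq0$ and $r_{n+1}=0$. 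Once the indexing is pinned down, the verification that each $r_i=q_{i+1}r_{i+1}+r_{i+2}$ is immediate from the single matrix multiplication $T_q\begin{pmatrix}u&v\\s&t\end{pmatrix}=\begin{pmatrix}s&t\\u+qs&v+qt\end{pmatrix}$, and the length of the chain obtained is $d$ (hence ``at most $d$'' in the statement, the slack coming from the possibility that $a$ or $b$ is zero or that the chain closes off one step early when $q_1$ or some intermediate quotient is $0$).
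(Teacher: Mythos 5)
There is a genuine gap at the core of your induction. Peeling $T_{q_d}$ off the \emph{left} gives the relation $b=q_dy+y'$, which is a division of $b$ by $y$, the second coordinate of the row above $(a,b)$ in the tower of partial products; iterating it yields a perfectly good Euclidean chain, but for the pair $(b,y)$ (with quotients $q_d,q_{d-1},\dots,q_1$), not for the pair $(b,a)$ that the lemma requires. Your attempted repair --- ``declaring'' $r_0=b$, $r_1=a$, $r_2$ the corresponding entry of $M'$, with quotient $q_d$ --- asserts the relation $b=q_da+r_2$, which is false in general. Already for $d=2$ and $\eta_1=\eta_2=0$ one has $T_{q_2}T_{q_1}=\left(\begin{smallmatrix}1&q_1\\q_2&1+q_1q_2\end{smallmatrix}\right)$, so $(a,b)=(q_2,1+q_1q_2)$ and $b-q_2a=1+q_1q_2-q_2^2$ is not an entry of $M'=T_{q_1}$ (take $q_1=2$, $q_2=3$); the correct first division is $b=q_1a+1$. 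So the quotients for the $(b,a)$ chain must appear in the \emph{original} order $q_1,\dots,q_d$, and the remainders are not the entries of the partial products $T_{q_j}\cdots T_{q_1}$.

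The paper avoids this by peeling the transvections off on the \emph{right}: using $T_\alpha T_\beta=\left(\begin{smallmatrix}1&0\\\alpha&1\end{smallmatrix}\right)\left(\begin{smallmatrix}1&\beta\\0&1\end{smallmatrix}\right)$, it multiplies the identity of Lemma \ref{distance_d} on the right by $E^{\eta_2}\left(\begin{smallmatrix}1&-q_1\\0&1\end{smallmatrix}\right)\left(\begin{smallmatrix}1&0\\-q_2&1\end{smallmatrix}\right)\cdots$, i.e.\ it performs column operations acting on the fixed bottom row $(a,b)$, turning it successively into $(r_1,r_2),(r_3,r_2),(r_3,r_4),\dots$ with $r_0=b$, $r_1=(-1)^{\eta_2}a$ and $r_{i+2}=r_i-q_{i+1}r_{i+1}$; the parity discussion at the end handles the termination. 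Your reduction of the sign factors $(-1)^{\eta_1}E^{\eta_2}$ and your base case are fine; the fix is to replace the left-peeling row recursion by this right-peeling column recursion (equivalently, to track the bottom-right entries of the partial products $T_{q_d}\cdots T_{q_j}$ obtained by removing factors from the right), which is essentially the paper's computation.
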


\begin{proof} 
Let $(a,b)$ be a non-isolated vertex of $\Gamma^*(G)$ at distance   $d$ from $(0,1).$  Lemma \ref{distance_d} tells us that  there exist integers $q_1,\ldots, q_d$ and $\eta_1,\eta_2\in \{0,1\}$ such that 
\begin{equation}\label{decomposition1} 
\begin{pmatrix}*&*\\a&b\end{pmatrix}=\begin{pmatrix}0&1\\1&q_d\end{pmatrix}\ldots\begin{pmatrix}0&1\\1&q_2\end{pmatrix}\begin{pmatrix}0&1\\1&q_1\end{pmatrix}(-1)^{\eta_1}E^{\eta_2}.
\end{equation}
Observe that for any integers $\alpha,\beta$ the following holds:
\[
\begin{pmatrix}0&1\\1&\alpha\end{pmatrix}\begin{pmatrix}0&1\\1&\beta\end{pmatrix}=\begin{pmatrix}1&0\\\alpha&1\end{pmatrix}\begin{pmatrix}1&\beta \\0&1\end{pmatrix}.
\]
Let $t$ be the largest even integer with $t\leq d.$ In view of the previous observation we can rewrite the decomposition in (\ref{decomposition1}) as follows:
\begin{equation}\label{case1}
\begin{pmatrix}*&*\\a&b\end{pmatrix}E^{\eta_2}\begin{pmatrix}1&-q_1\\0&1\end{pmatrix}\begin{pmatrix}1&0\\-q_2&1\end{pmatrix}\ldots\begin{pmatrix}1&-q_{t-1}\\ 0&1\end{pmatrix}\begin{pmatrix}1&0\\-q_t&1\end{pmatrix}=(-1)^{\eta_1}A,
\end{equation}
with 
$$A=\begin{pmatrix}1&0\\0&1\end{pmatrix} \text { if $t$ is even},\quad A=\begin{pmatrix}0&1\\1&q_d\end{pmatrix} \text{ otherwise}.$$

Define
$$r_0=b, \quad r_{1}=(-1)^{\eta_2} a \quad  { \text { and }} \quad r_{i+2}=r_i-q_{i+1}r_{i+1}\quad {\text {for $0\leq i\leq t-1$}}.$$
It follows from (\ref{case1}) that $r_{t+1}=0$ if $d$ is even
and $r_{t+1}=(-1)^{\eta_1}$ if $d$ is odd. In the latter case $0=r_t-qr_{t+1},$
with $q=(-1)^{\eta_1}r_t$,
so in both  cases $(b,(-1)^{\eta_2} a)$ (and consequently $(b,a)$) belongs to an Euclidean chain  of length at most $d$.
\end{proof}

\begin{cor}\label{coro1}
The diameter of $\Gamma^*(\z\times \z)$ is not finite.
\end{cor}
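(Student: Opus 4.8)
The plan is to combine Lemma~\ref{distance} with the classical fact that the Euclidean algorithm is slowest on consecutive Fibonacci numbers, so that no such pair can belong to a short Euclidean chain. Write $F_n$ for the $n$-th Fibonacci number. Since $\gcd(F_n,F_{n+1})=1$, Lemma~\ref{nonisol} shows that $(F_n,F_{n+1})$ is a non-isolated vertex of $\Gamma^*(\z\times \z)$, and since that graph is connected the distance $d_n$ from $(F_n,F_{n+1})$ to $(0,1)$ is finite. Applying Lemma~\ref{distance} with $(a,b)=(F_n,F_{n+1})$, the pair $(F_{n+1},F_n)$ belongs to an Euclidean chain of length at most $d_n$; thus $d_n\geq L(n)$, where $L(n)$ is the least length of an Euclidean chain with $r_0=F_{n+1}$ and $r_1=F_n$. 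So it is enough to prove $L(n)\to\infty$ as $n\to\infty$: then $\Gamma^*(\z\times \z)$ has vertices arbitrarily far from $(0,1)$ and hence infinite diameter.

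I would first reformulate $L(n)$. In an Euclidean chain $r_0,\dots,r_\ell$, $r_{\ell+1}=0$ for a coprime pair $(r_0,r_1)$ with $r_1\neq 0$, the relations $r_{i+2}=r_i-q_{i+1}r_{i+1}$ give $\gcd(r_i,r_{i+1})=\gcd(r_{i+1},r_{i+2})$, so $|r_\ell|=\gcd(r_0,r_1)=1$, and unwinding $r_i/r_{i+1}=q_{i+1}+r_{i+2}/r_{i+1}$ exhibits $r_0/r_1=[q_1;q_2,\dots,q_\ell]$ as a continued fraction with integer partial quotients; conversely, any such expansion of $r_0/r_1$ gives an Euclidean chain of the same length. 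Moreover a minimal chain may be assumed to have every $r_i$ ($0\leq i\leq\ell$) nonzero, as a vanishing intermediate $r_i$ yields a shorter chain, and then every $q_i$ with $i\geq 2$ nonzero, using the contraction $[\dots,a,0,b,\dots]=[\dots,a+b,\dots]$. Hence $L(n)$ is exactly the minimal length of a representation of $F_{n+1}/F_n$ as a continued fraction with (nonzero) integer partial quotients.

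The heart of the matter, and the only serious obstacle, is the lower bound $L(n)\to\infty$: this is precisely the known fact that a general, possibly signed, division chain for two consecutive Fibonacci numbers must be long; see \cite{ck}. I would derive it from the bad approximability of the golden ratio $\varphi=(1+\sqrt5)/2$: since $F_{n+1}/F_n$ is a convergent of $\varphi$ --- indeed $|F_{n+1}/F_n-\varphi|<F_n^{-2}$ --- the complete quotients of any integer continued fraction of $F_{n+1}/F_n$ remain confined to a bounded set, so that whenever $|r_i-q\,r_{i+1}|$ is made small with $q\in\z$ the ratio $|r_{i+2}/r_{i+1}|$ stays above an absolute constant $c\in(0,1)$ (morally $c$ is close to $|\varphi-2|$). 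Iterating down the chain then gives $1=|r_\ell|\geq c^{\,\ell-1}F_n$, so $\ell\geq 1+\log_{1/c}F_n\to\infty$. Together with $d_n\geq L(n)$ this proves $\diam(\Gamma^*(\z\times \z))=\infty$. The care needed to make the ``confinement'' step rigorous --- essentially redoing the worst-case analysis of Euclid's algorithm over $\z$ with arbitrary quotients --- is where the real work lies, and it is the reason one may prefer simply to quote this fact from \cite{ck} (or \cite[Section~14]{om}).
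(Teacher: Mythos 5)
Your reduction is exactly the paper's: combine Lemma~\ref{distance} with the fact that consecutive Fibonacci numbers admit no short Euclidean chain, so that the vertices $(F_{2n},F_{2n+1})$ are arbitrarily far from $(0,1)$. Where you and the paper part ways is only in how that arithmetic fact is certified. The paper quotes Lazard \cite{laz}: among all generalized division chains, the one that at each step takes the quotient with $|r|\le |b|/2$ is shortest; it then writes down that centered chain for $(F_{2n+1},F_{2n})$ explicitly (quotients $2,3,3,\dots,3$, length about $n$) and reads off the lower bound $n+1$ on the distance. That is a clean, citable one-line finish, and your own fallback of quoting \cite{ck} or \cite[Section~14]{om} is of the same nature and equally acceptable.

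Your attempted self-contained proof of the lower bound, however, has a genuine gap, and it is larger than the phrase ``the care needed to make the confinement step rigorous'' suggests. The claim that the complete quotients $r_i/r_{i+1}$ of \emph{any} integer continued fraction of $F_{n+1}/F_n$ stay in a bounded set near $\varphi$ is false: a chain is free to choose a non-centered quotient $q_{i+1}$, which moves $r_i/r_{i+1}$ to an arbitrary rational, and from such a pair a single later division can make $|r_{i+2}|/|r_{i+1}|$ as small as you like (already from the pair $(11,5)$ one step reaches remainder $1$, with ratio $1/5<1/\varphi^2$). So the uniform per-step contraction $|r_{i+2}|\ge c\,|r_{i+1}|$ on which your estimate $1=|r_\ell|\ge c^{\ell-1}F_n$ rests simply does not hold step by step. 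The true statement is global, not local: deviating from the centered choice may produce a later step with a very small ratio, but it can never shorten the chain overall. Proving that is precisely the content of Lazard's optimality theorem (a careful induction trading a wasted step against a faster later decrease), and it cannot be replaced by a one-step Diophantine bound on $\varphi$. Either carry out that induction or cite \cite{laz}/\cite{ck} as the paper does; with that citation in place the rest of your argument is correct.
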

\begin{proof}
	In \cite{laz} Lazard proved that the fastest method to obtain the g.c.d.\! of two integers by means of a succession of generalized  divisions consists in taking, at each step, the division $a=bq+r$ with $|r|\leq |b|/2.$
	In particular consider the series $F_n$ of the Fibonacci numbers. The shortest Euclidean chain for the pair
	$(F_{2n+1},F_{2n})$ is 
	$$\begin{aligned}F_{2n+1}&=2F_{2n}-F_{2n-2}\\F_{2n}&=3F_{2n-2}-F_{2n-4}\\F_{2n-2}&=3F_{2n-4}-F_{2n-6}\\\dots&\dots\dots\dots\dots\dots\dots\\F_4&=3F_2-F_0.\end{aligned}$$  If follows from Lemma \ref{distance} that the distance in the graph $\Gamma^*(\z\times \z)$ of the vertex $(F_{2n},F_{2n+1})$ from $(0,1)$ is at least $n+1,$ so the diameter of $\Gamma^*(\z\times \z)$ is not finite.
\end{proof}

\section{The free group of rank 2}
In this section we will denote by $F$ the free group of rank 2. Recall (see for example  \cite[Theorem 3.2]{cgt}) that the automorphism group of the free group with ordered basis $x_1,x_2$ is generated by the following elementary Nielsen transformations:
\begin{enumerate}
\item switch $x_1$ and $x_2;$
\item replace $x_1$ with $x_1^{-1};$
\item replace $x_1$ with $x_1\cdot x_2.$
\end{enumerate}

\begin{lemma}\label{lemsim}
Let $F=\langle a_1,a_2\rangle
=\langle b_1,b_2\rangle.$ If there exists an elementary Nielsen transformation $\gamma$ such that $a_1^\gamma=b_1$
and $a_2^\gamma=b_2,$ then $a_1$ and $b_1$ (and consequently  $a_2$ and $b_2$) belong to the same connected component of $\Gamma(F).$
\end{lemma}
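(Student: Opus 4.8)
The plan is to mimic exactly the proof of Lemma \ref{lem}, working case by case through the three generators of $\aut(F)$ listed above. Given $F=\langle a_1,a_2\rangle=\langle b_1,b_2\rangle$ with $b_i=a_i^\gamma$ for one of the three elementary Nielsen transformations $\gamma$, I want to exhibit a short path in $\Gamma(F)$ connecting $a_1$ to $b_1$; this simultaneously connects $a_2$ to $b_2$ (since once $a_1,b_1$ lie in the same component and $\{a_1,a_2\}$, $\{b_1,b_2\}$ are edges, one checks the four vertices are all in that component — indeed $a_2$ is adjacent to $a_1$ and $b_2$ is adjacent to $b_1$).

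First, if $\gamma$ is the switch $x_1\leftrightarrow x_2$, then $b_1=a_2$ and $b_2=a_1$; since $\{a_1,a_2\}$ is an edge of $\Gamma(F)$, the vertices $a_1$ and $b_1=a_2$ are adjacent, so nothing to prove. Second, if $\gamma$ replaces $x_1$ by $x_1^{-1}$, then $b_1=a_1^{-1}$ and $b_2=a_2$; here $\langle a_1^{-1},a_2\rangle=\langle a_1,a_2\rangle=F$, so $a_1^{-1}$ is adjacent to $a_2$, and $a_2$ is adjacent to $a_1$, giving the path $(a_1,a_2,a_1^{-1})$. Third, if $\gamma$ replaces $x_1$ by $x_1x_2$, then $b_1=a_1a_2$ and $b_2=a_2$; now $\langle a_1a_2,a_2\rangle=\langle a_1,a_2\rangle=F$, so $a_1a_2$ is adjacent to $a_2$, which is adjacent to $a_1$, yielding the path $(a_1,a_2,a_1a_2)$.

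In each of the three cases we have produced a path of length at most $2$ in $\Gamma(F)$ from $a_1$ to $b_1$ whose intermediate vertex (when present) is $a_2$, and along the way $b_2=a_2$ sits on this path as well. Hence $a_1,a_2,b_1,b_2$ all lie in a single connected component of $\Gamma(F)$, which is the assertion.

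There is essentially no hard part: unlike Lemma \ref{lem}, where the computation $\gamma=\alpha+t\beta$ exploited linearity in the abelian group $\z\times\z$, here we only need the trivial observation that applying a single Nielsen generator to a basis $\{a_1,a_2\}$ leaves one of the two elements untouched and replaces the other by a word that together with the untouched element still generates $F$. The only point requiring a sentence of care is the passage from ``$a_1$ and $b_1$ are in the same component'' to ``$a_2$ and $b_2$ are in the same component,'' which follows immediately because $a_2$ is a neighbour of $a_1$ and $b_2$ is a neighbour of $b_1$ in $\Gamma(F)$.
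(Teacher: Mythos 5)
Your proof is correct and follows essentially the same route as the paper: a case-by-case check over the three elementary Nielsen transformations, observing that the new generator together with an old one still generates $F$. The only cosmetic difference is in case (3), where the paper notes that $\langle a_1a_2,a_1\rangle=F$ so that $b_1$ is directly adjacent to $a_1$, whereas you route the path through $a_2$; both are valid.
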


\begin{proof}
The statement is clearly true if $\gamma$ is of kind (1) or (2).  In the third case $b_1=a_1\cdot a_2,$ so $\langle b_1, a_1\rangle=\langle a_1,a_2\rangle=F.$
\end{proof}

\begin{thm}\label{confree}The graph $\Gamma^*(F)$  is connected.
\end{thm}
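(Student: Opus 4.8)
The plan is to mimic exactly the proof of the corresponding statement for $\z\times\z$, replacing ``$A$ is a product of elementary matrices'' by ``$\gamma$ is a product of elementary Nielsen transformations'', and Lemma \ref{lem} by Lemma \ref{lemsim}. So first I would take an arbitrary non-isolated vertex $a_1$ of $\Gamma(F)$; by definition of non-isolated there exists $a_2\in F$ with $F=\langle a_1,a_2\rangle$. Thus $(a_1,a_2)$ is an ordered basis of $F$. Fix once and for all the standard ordered basis $x_1,x_2$ of $F$; then there is a (unique) automorphism $\gamma\in\aut(F)$ with $x_1^\gamma=a_1$ and $x_2^\gamma=a_2$.

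Next I would invoke the generation statement recalled just before Lemma \ref{lemsim}: $\aut(F)$ is generated by the three elementary Nielsen transformations. Hence $\gamma=\gamma_1\cdots\gamma_t$ for some elementary Nielsen transformations $\gamma_1,\dots,\gamma_t$. For $r\in\{1,\dots,t+1\}$ set $(c_1^{(r)},c_2^{(r)})=(x_1^{\gamma_r\cdots\gamma_t},x_2^{\gamma_r\cdots\gamma_t})$, with the convention that for $r=t+1$ this is just $(x_1,x_2)$; each such pair is an ordered basis of $F$ since it is the image of a basis under an automorphism. Now I would argue by downward induction on $r$ that both coordinates $c_1^{(r)},c_2^{(r)}$ lie in the connected component of $\Gamma(F)$ containing $x_1$ and $x_2$. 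The base case $r=t+1$ is trivial. For the inductive step, $(c_1^{(r)},c_2^{(r)})$ is obtained from $(c_1^{(r+1)},c_2^{(r+1)})$ by applying the single elementary Nielsen transformation $\gamma_r$ (read appropriately: $(c_1^{(r)},c_2^{(r)})$ is the image of $(c_1^{(r+1)},c_2^{(r+1)})$ under $\gamma_r$, since $x_i^{\gamma_r\cdots\gamma_t}=(x_i^{\gamma_r})^{\gamma_{r+1}\cdots\gamma_t}$ and $\gamma_{r+1}\cdots\gamma_t$ carries the basis $(x_1,x_2)$ to $(c_1^{(r+1)},c_2^{(r+1)})$). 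So Lemma \ref{lemsim}, applied with $\langle a_1,a_2\rangle=\langle c_1^{(r+1)},c_2^{(r+1)}\rangle$, $\langle b_1,b_2\rangle=\langle c_1^{(r)},c_2^{(r)}\rangle$ and $\gamma=\gamma_r$, gives that $c_1^{(r)},c_2^{(r)}$ lie in the same connected component as $c_1^{(r+1)},c_2^{(r+1)}$, which by induction is the component of $x_1$. Taking $r=1$ yields that $a_1=x_1^\gamma=c_1^{(1)}$ is in the component of $(x_1,x_2)$. Since $a_1$ was an arbitrary non-isolated vertex, all of $\Gamma^*(F)$ lies in one component, i.e. $\Gamma^*(F)$ is connected.

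The only genuinely delicate point is the bookkeeping in the inductive step: one must be careful that composing Nielsen transformations acts on the bases in the order that makes the single-step move an honest application of Lemma \ref{lemsim} (the lemma is phrased for two bases differing by one elementary transformation, not for ``plug a group word into $x_1,x_2$''). This is the same subtlety as in the $\GL(2,\z)$ proof, where one peels off elementary matrices one at a time from the left of $J_1\cdots J_t$; here one peels off Nielsen transformations one at a time. I do not anticipate any real obstacle beyond getting this order straight, since all the structural input (generation of $\aut(F)$ by the three transformations, and Lemma \ref{lemsim}) is already available. If one prefers to avoid even that bookkeeping, one can replicate the phrasing of the $\z\times\z$ proof verbatim: ``by iterated applications of Lemma \ref{lemsim}, for every $r\in\{1,\dots,t\}$ the pair $x_1^{\gamma_r\cdots\gamma_t},x_2^{\gamma_r\cdots\gamma_t}$ belongs to the connected component of $\Gamma(F)$ containing $x_1$ and $x_2$; in particular $x_1$ and $a_1$ lie in the same connected component.''
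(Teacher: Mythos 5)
Your proof is correct and is essentially the paper's own argument: decompose the automorphism carrying $(x_1,x_2)$ to $(a_1,a_2)$ into elementary Nielsen transformations and walk along the resulting chain of bases, applying Lemma \ref{lemsim} once per factor. The only difference is that you peel the factors off from the left (which matches the ``Nielsen move on a pair'' reading used in the proof of Lemma \ref{lemsim}, where the type (3) step gives $b_1=a_1a_2$), whereas the paper peels from the right; this is precisely the bookkeeping point you flag, and either convention yields the same conclusion.
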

\begin{proof}
Fix $x_1,x_2$ such that $F=\langle x_1,x_2\rangle$ and let $a$ be a non-isolated vertex of $\Gamma(F).$ There exists $b$ such that $F=\langle a, b\rangle$ and $\gamma\in \aut(F)$ with $x_1^\gamma=a$ and $x_2^\gamma=b.$
Thus there exist $t$ elementary Nielsen transformations $\gamma_1,\dots,\gamma_t$ such that $\gamma=\gamma_1\cdots \gamma_t.$ By Lemma \ref{lemsim}, $x_1,x_1^{gamma_1}$  and, for every $r\in \{2,\dots,t-1\},$ the two elements $x_1^{\gamma_1\cdots\gamma_{r-1}}$ and $x_1^{\gamma_1\cdots\gamma_{r}}$ belong to the same connected component of $\Gamma(F).$
This implies that  $x_1$ and $x_1^\gamma=a$
are in the same connected component.
\end{proof}

\begin{lemma}\label{rialzo}
Let $f\in F$. If $fF^\prime$ is a non-isolated vertex of $\Gamma(F/F^\prime),$ then there exists a non-isolated vertex $f^*$ of $\Gamma(F)$ such that
 $f^*F^\prime=fF^\prime.$
\end{lemma}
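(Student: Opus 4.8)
The claim is a lifting statement: given $f\in F$ such that the image $fF'$ generates $F/F'\cong\z\times\z$ together with some other element, I want to modify $f$ within its coset $fF'$ to obtain an element $f^*$ that is genuinely part of a generating pair of $F$ itself. The natural strategy is to pass through the abelianization. Since $fF'$ is a non-isolated vertex of $\Gamma(\z\times\z)$, Lemma~\ref{nonisol} tells us that if we write $fF'=(a,b)$ in coordinates then $\gcd(a,b)=1$. By Bézout there are integers $x,y$ with $\det\begin{pmatrix}a&b\\x&y\end{pmatrix}=\pm1$, i.e.\ there is a matrix $M\in\GL(2,\z)$ whose first row is $(a,b)$.

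\textbf{Key steps.} First, lift $M$ to an automorphism of $F$: by the remark opening Section~4 (or \cite[Theorem 3.2]{cgt} together with the surjection $\aut(F)\to\GL(2,\z)$), every element of $\GL(2,\z)$ is the image of some $\varphi\in\aut(F)$ under abelianization, since the elementary Nielsen transformations map onto the elementary matrices generating $\GL(2,\z)$. Second, set $f^*:=x_1^{\varphi}$ and $b^*:=x_2^{\varphi}$, where $x_1,x_2$ is the fixed ordered basis of $F$. Then $F=\langle f^*,b^*\rangle$, so $f^*$ is a non-isolated vertex of $\Gamma(F)$. Third, check that $f^*$ lies in the correct coset: the abelianization of $f^*=x_1^{\varphi}$ is exactly the first row of the matrix representing $\varphi$ in $\GL(2,\z)$, which we arranged to be $(a,b)$; hence $f^*F'=(a,b)=fF'$ in $F/F'$, as required.

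\textbf{Main obstacle.} The only delicate point is making sure the abelianization map $\aut(F)\to\GL(2,\z)$ really is surjective with the precise normalization I want — namely that if $\varphi$ is built from the three elementary Nielsen transformations then the induced matrix on $F/F'$ (with respect to the basis $x_1F',x_2F'$) is the corresponding product of a transposition, the dilation $\diag(-1,1)$, and the transvection $\begin{pmatrix}1&1\\0&1\end{pmatrix}$, and that these generate all of $\GL(2,\z)$. This is standard (it is essentially \cite[Theorem 14.2]{om} applied to $R=\z$, already invoked earlier), so the argument is short: choose $\varphi$ lifting a matrix with first row $fF'$, and read off $f^*=x_1^{\varphi}$. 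I would present it in roughly that order, with the Bézout step and the lift of $M$ stated explicitly and the coset check as a one-line verification.
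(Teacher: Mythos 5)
Your proposal is correct and follows essentially the same route as the paper: both lift an automorphism of $F/F'\cong\z\times\z$ carrying a fixed basis vector onto $fF'$ through the epimorphism $\aut(F)\to\aut(F/F')\cong\GL(2,\z)$, and define $f^*$ as the image of a basis element of $F$ under the lifted automorphism. Your version merely makes the existence of the matrix $\alpha$ explicit via B\'ezout, which the paper leaves implicit.
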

\begin{proof}
Let $\overline F=F/F^\prime$ and for every $x\in F$ let $\overline x=xF^\prime.$ Since $F^\prime$ is a characteristic subgroup of $F$,  there exists a natural epimorphism from $\aut(F)\to \aut(\overline F)\cong\GL(2,\z)$ 
sending any automorphism $\gamma$ of $F$ to the induced automorphism $\overline \gamma$ of $\overline F.$ Notice that $\overline{x^\gamma}=\overline{x}^{\overline {\gamma}}$ for every $x\in F.$ Fix a generating pair $(a,b)$ of $F.$ Since $\overline a$ and $\overline f$ are non-isolated vertices of  $\overline F\cong\z\times \z,$ there exists $\alpha\in \aut(\overline F)$ such that
$\overline f=\overline a^\alpha.$ Let $\gamma\in \aut(F)$ such that $\overline \gamma=\alpha$ and let $f^*=a^\g.$
Since $\langle f^*,b^\gamma\rangle=F,$ the vertex $f^*$ is non-isolated in $\Gamma(F).$ Moreover $\overline{f^*}=\overline{a^\g}=\overline a^{\overline \g}=\overline a^\alpha=\overline f.
$
\end{proof}

\begin{cor}\label{fine}
	The diameter of $\Gamma^*(F)$ is not finite.
\end{cor}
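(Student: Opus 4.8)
The plan is to transfer the infinitude of the diameter of $\Gamma^*(\z\times \z)$ up to $F$ via the projection $F\to F/F^\prime\cong \z\times\z$, using Lemma \ref{rialzo} to lift vertices and the obvious fact that the projection does not increase distances in the generating graph. First I would observe that if $f,g\in F$ generate $F$, then $\bar f,\bar g$ generate $F/F^\prime$, so the natural map $x\mapsto \bar x$ sends every edge of $\Gamma(F)$ to an edge of $\Gamma(F/F^\prime)$; hence it sends a path to a path (possibly with repeated consecutive vertices, but that only shortens it), and therefore $d_{\Gamma^*(F/F^\prime)}(\bar f,\bar g)\le d_{\Gamma^*(F)}(f,g)$ whenever $f,g$ are non-isolated in $\Gamma(F)$. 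In particular, if the diameter of $\Gamma^*(F)$ were a finite number $D$, then any two non-isolated vertices of $\Gamma(F)$ would be at distance at most $D$, and projecting would force any two non-isolated vertices of $\Gamma(F/F^\prime)$ that \emph{lift} to non-isolated vertices of $\Gamma(F)$ to be at distance at most $D$ in $\Gamma^*(\z\times\z)$.

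Next I would use Lemma \ref{rialzo} to close the gap: every non-isolated vertex $\bar x$ of $\Gamma(F/F^\prime)$ has a non-isolated lift $x^*\in F$ with $\overline{x^*}=\bar x$. So, fixing the basepoint, take the vertex $(0,1)$ of $\Gamma^*(\z\times\z)$ together with the vertices $(F_{2n},F_{2n+1})$ used in Corollary \ref{coro1}; these are all non-isolated in $\Gamma(\z\times\z)$ (they have coprime coordinates, by Lemma \ref{nonisol}), and by Corollary \ref{coro1} the distance between $(0,1)$ and $(F_{2n},F_{2n+1})$ in $\Gamma^*(\z\times\z)$ is at least $n+1$. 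Lift each of these to non-isolated vertices of $\Gamma(F)$ via Lemma \ref{rialzo}, say $v_0$ over $(0,1)$ and $w_n$ over $(F_{2n},F_{2n+1})$. By the distance-non-increasing property above, $d_{\Gamma^*(F)}(v_0,w_n)\ge d_{\Gamma^*(\z\times\z)}((0,1),(F_{2n},F_{2n+1}))\ge n+1$.

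Since $n$ is arbitrary, this produces pairs of vertices in $\Gamma^*(F)$ at arbitrarily large distance, so $\diam(\Gamma^*(F))=\infty$, which is the claim. I do not expect any serious obstacle here: the only point that requires a line of care is the reduction from ``the diameter is $D$'' to ``the two specific lifted vertices are within $D$,'' which is immediate once one notes that $\Gamma^*(F)$ being connected (Theorem \ref{confree}) means $v_0$ and $w_n$ lie in the same component and their distance is a well-defined finite number bounded by $\diam(\Gamma^*(F))$; and the observation that the homomorphic image of an edge is an edge, which is just the statement that a quotient of a generating pair is a generating pair. Everything else is quotation of Lemma \ref{rialzo}, Lemma \ref{nonisol}, and Corollary \ref{coro1}.
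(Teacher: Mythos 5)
Your argument is correct and is essentially the paper's own proof: both lift a pair of vertices of $\Gamma(F/F')$ at distance at least $d$ (guaranteed by Corollary \ref{coro1}) to non-isolated vertices of $\Gamma(F)$ via Lemma \ref{rialzo}, and then observe that the quotient map sends paths to paths, so the lifted vertices are at distance at least $d$ as well. The only cosmetic difference is that you name the Fibonacci vertices explicitly rather than quoting Corollary \ref{coro1} abstractly.
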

\begin{proof}
Let $d$ be any positive integer. By Corollary \ref{coro1}, there exist two non-isolated vertices $\alpha$ and $\beta$ of $\Gamma(F/F^\prime)$ such that the distance between $\alpha$ and $\beta$ in $\Gamma(F/F^\prime)$ is at least $d$. By Lemma \ref{rialzo}, there exist two non-isolated vertex $a$ and $b$ of $\Gamma(F)$, such that $\alpha=aF^\prime$ and $\beta=bF^\prime.$ We claim that also the distance  between $a$ in $b$ in $\Gamma(F)$ is at least $d.$ Indeed assume that $x_0=a,x_1,\dots,x_{n-1},x_n=b$ is a path in $\Gamma(F)$ joining
$a$ and $b.$ Then $\alpha=x_0F^\prime,x_1F^\prime,\dots,x_{n-1}F^\prime,x_nF^\prime=\beta$ is a path in $\Gamma(F/F^\prime)$ joining
$\alpha$ and $\beta,$ and necessarily we have $n\geq d.$
\end{proof}

\end{document}